\def\sc{\scriptstyle}
\def\dis{\displaystyle}
\def\ul{\underline}\def\OPLUS#1{\raisebox{-8pt}{$\stackrel{\mbox{\large$\dis\oplus$}}{\sc #1}$}}
\def\bs{\backslash}\def\si{\sigma}\def\th{\theta}
\def\cl{\centerline}
\def\vs{\vspace*}
\def\Z{\mathbb{Z}}
\def\C{\mathbb{C}}
\numberwithin{equation}{section}
\newtheorem{theo*}{Theorem}
\newtheorem{rema*}{Remark}
\newtheorem{theo}{Theorem}[section]
\newtheorem{lemm}[theo]{Lemma}
\newtheorem{prop}[theo]{Proposition}
\newtheorem{clai}{Claim}
\def\ptl{\partial}\def\si{\sigma}\def\th{\theta}
\begin{document}
\begin{CJK*}{GBK}{song}
\def\ell{l}

\begin{center}
{\bf\large Automorphism groups of Witt algebras\,$^*$}
\footnote {$^*\,$Supported by NSF grant no. 11371278, 11431010, 
the Fundamental Research Funds for the Central Universities of China, Innovation Program of Shanghai Municipal Education Commission and  Program for Young Excellent Talents in Tongji University.

\ $\ ^\dag\,$Correspondence: Y.~Su (Email: ycsu@tongji.edu.cn)

}
\end{center}

\cl{ Jianzhi Han, Yucai Su$\,^{\dag}$}

\cl{\small Department of Mathematics, Tongji University, Shanghai
200092, China}

\vs{8pt}

{\small
\parskip .005 truein
\baselineskip 3pt \lineskip 3pt

\noindent{{\bf Abstract:} The automorphism groups ${\rm Aut}\,A_n$ and ${\rm Aut\,}W_n$ of  the polynomial algebra $A_n=$\break $\C[x_1,x_2,\cdots, x_n]$ and the rank $n$ Witt algebra $W_n={\rm Der\,}A_n$ are  studied in this paper. It is well-known that  ${\rm Aut\,}A_n$ for $n\ge3$ and ${\rm Aut\,}W_n$ for $n\ge2$ are open. In the present paper,
by characterizing  the semigroup ${\rm End\,}W_n\bs\{0\}$ of nonzero endomorphisms of $W_n$ via the semigroup of the so-called Jacobi tuples,
we establish an isomorphism between ${\rm Aut}\,A_n$ and ${\rm Aut\,}W_n$ for any positive integer $n$. In particular, this enables us to work out the automorphism group ${\rm Aut\,}W_2$
of $W_2$. \vs{5pt}

\noindent{\bf Key words:} polynomial algebra, Witt algebra, automorphism,  Jacobi conjecture, Jacobi tuple.}

\noindent{\it Mathematics Subject Classification (2010):} 17B40, 16S50.}
\parskip .001 truein\baselineskip 6pt \lineskip 6pt
\section{Introduction}
\setcounter{equation}{0}
With a history of 100 years \cite{C}, the (one-sided rank $n$) Witt algebras $W_n={\rm Der\,}A_n$ (the derivation algebras of the polynomial algebras $A_n:=\C[x_1,...,x_n]$ of $n$ variables for all $n\ge1$) are the first known examples of infinite-dimensional simple Lie algebras. However, the determination of automorphism groups ${\rm Aut\,}W_n$ of $W_n$ is a long outstanding open problem (even for case $n=2$). It is well-known (e.g., \cite{MTU,SU1,SuXu1,SuXu2,SuXuZhang,SuZhao1,SuZhao2,SuZhaoZhu})
that
automorphism groups of Lie algebras constitute an important part in the structure theory of Lie algebras.
For the case of the two-sided Witt algebra $W_n^{\pm}={\rm Der\,}A_n^{\pm}$ (the derivation algebra of the  Laurent polynomial algebra $A_n^{\pm}:=\C[x_1^{\pm1},...,x_n^{\pm1}]$),
the problem of determining the automorphism group ${\rm Aut\,}W_n^{\pm}$ of $W_n^{\pm}$ is much easier (e.g., \cite{DZ,SuXu1,SuXu2,SuXuZhang,Zhao}), as any automorphism $\sigma\in{\rm Aut\,}W_n^{\pm}$ must fix the set ${\mathscr F}_{W_n^{\pm}}$ of the ${\rm ad}$-locally finite elements of
$W_n^{\pm}$ and in this case ${\mathscr F}_{W_n^{\pm}}$ turns out to be the vector space ${\mathscr F}_{W_n^{\pm}}=\oplus_{i=1}^n\C x_i\frac{\partial}{\partial x_i}$.
In sharp contrast to $W_n^{\pm}$, the set ${\mathscr F}_{W_n}$ of the ${\rm ad}$-locally finite elements of
$W_n$ is unachievable.

The distinguished Jacobi conjecture posed by Keller in 1939 says  that if $f_1,f_2,..., f_n\!\in\! A_n$ are
$n$ polynomials on $n$ variables such that the corresponding Jacobi determinant $J(f_1,...,f_n)$ $:={\rm Det}\big(\frac{\partial f_i}{\partial x_j}\big){}_{1\le i,j\le n}\in\C^\times:=\C\bs\{0\}$ is a nonzero complex number (in this case, the $n$-tuple $(f_1,...,f_n)$ is referred to as
a Jacobi tuple in the present paper),  then $f_1,f_2,..., f_n$ are generators of $A_n$, namely, $A_n=\C[f_1,f_2,...,f_n]$. Many interesting results would follow if this conjecture holds. Unfortunately, over seven decades' endeavor made by many mathematicians (e.g., \cite{J,R,M1,SU,V1,V2,W,Y}),
it 
is still an open problem.
Obviously, the Jacobi conjecture is equivalent to the statement that every endomorphism of $A_n$ sending the generating tuple
$(x_1,...,x_n)$ to a Jacobi tuple is
an automorphism. Thus the Jacobi conjecture
is closely related to the automorphism group ${\rm Aut\,}A_n$ of $A_n$.
The group ${\rm Aut\,}A_n$
is clear in case $n\leq2$ (cf. \cite{N}), but for $n\geq 3$ this is yet undetermined.
Obviously, there are three types automorphisms: $s_i,\tau_a,\psi_p$ for $1\le i\le n-1$, $a\in\C^\times$, $p\in\Z^{\ge0}$,
where $s_i$ is the automorphism which
switches $x_i$ and $x_{i+1}$ and fixes other $x_j$'s, while $\tau_a$ is the automorphism which
sends $x_1$ to $ax_1$ and fixes other $x_j$'s, and
$\psi_p$  is the automorphism 
which
sends $x_2$ to $x_2+x_1^p$ and fixes other $x_j$'s.
The subgroup ${\rm Ta\,}A_n$ of ${\rm Aut\,}A_n$ generated by these three types automorphisms is the group of tame automorphisms, and the elements of ${\rm Aut\,}A_n\bs{\rm Ta\,}A_n$ are called wild automorphisms. It is well-known that there are no wild automorphisms of $A_2$. The first example of a wild automorphism
is the Nagata automorphism $\sigma_1$ of $A_3$ given in \cite{N} (and proved to be wild in \cite{ShU1,ShU2}) as follows:

$$\si_1(x_1)\!=\!x_1\!-\!2(x_2^2\!+\!x_1x_3)x_2\!-\!(x_2^2\!+\!x_1x_3)^2x_3,\ \ \si_1(x_2)\!=\!x_2\!+\!(x_2^2\!+\!x_1x_3)x_3, \ \  \si_1(x_3)\!=\!x_3.
$$

%

The Jacobi conjecture is also closely related to another conjecture posed in \cite[Conjecture 1]{Zhao} (referred to as the Witt algebra's conjecture for easy reference) which states that any nonzero endomorphism of $W_n$ is an automorphism (or equivalently, any nonzero endomorphism of $W_n$
is surjective), namely, ${\rm Aut\,}W_n={\rm End\,}W_n\bs \{0\}$. In fact, it was proved in \cite[Theorem 4.1]{Zhao}  that
the Witt algebra's conjecture implies the Jacobi conjecture. From this, one can expect that the determination of ${\rm Aut\,}W_n$ is a highly nontrivial problem.

In the present paper, by embedding the Witt algebra $W_n$ for any $n\ge1$ into the derivation algebra $\bar W_n={\rm Der\,}\bar A_n$ of
the field $\bar A_n=\C(x_1,...,x_n)$ of rational functions in $n$ variables (regarding $\bar A_n$ as an algebra over $\C$), we characterize the semigroup ${\rm End\,}W_n\bs \{0\}$ via the set ${\rm JT}_n$ of Jacobi tuples of $A_n$.
This provides us a way to prove an equivalence between the Jacobi conjecture and the Witt algebra's conjecture, and to
establish an isomorphism between ${\rm Aut\,}W_n$ and ${\rm Aut\,}A_n$. The later result in turn enables us to work out the automorphism group ${\rm Aut\,}W_2$ of $W_2$.

To summarize our main results, we first give a semigroup structure on ${\rm JT}_n$ by defining for  $f=(f_1,...,f_n),$ $g=(g_1,...,g_n)\in {\rm JT}_n$,
\begin{equation}\label{JT-n-prod}
f\cdot g=h, \mbox{ where $h=(h_1,...,h_n)$ with } h_i=g_i(f_1,...,f_n)\mbox{ for }1\le i\le n,
\end{equation}where $g_i(f_1,...,f_n)=g_i|_{(x_1,\cdots,x_n)=(f_1,\cdots,f_n)}.$
 By the chain rule of partial derivatives, we see that the resulting tuple $h$ is indeed in ${\rm JT}_n$, and
obtain a semigroup ${\rm JT}_n$ under the multiplication $``\cdot"$ defined in \eqref{JT-n-prod}.

Let $f=(f_1, f_2,\cdots, f_n)\in{\rm JT}_n$ be a Jacobi tuple, and assume
$J(f_1,f_2,\cdots, f_n)=c\in\C^\times$.
Let  $\sigma_f $ be the linear map of $W_n$ by defining for $k_i\in\Z^{\ge0}$ and $1\le j\le n$,
\begin{equation}\label{sigma-f}
\sigma_f(x_1^{k_1}x_2^{k_2}\cdots x_n^{k_n}\partial_j)=f_1^{k_1}f_2^{k_2}\cdots f_n^{k_n}\th_j,\mbox{ where }\partial_j=\frac{\partial}{\partial x_j},\ \th_j=\frac1c\mbox{$\sum\limits_{\ell=1}^n$}M_{\ell j}\partial_\ell,
\end{equation}
and $M_{\ell j}$ is the $(\ell,j)$-cofactor of the Jacobi matrix $M:=\big(\frac{\partial f_i}{\partial x_j}\big){}_{1\le i,j\le n}$.
One can easily verify that $\th_j(f_i)=\delta_{i,j}$ for $1\le i,j\le n,$ from this it is easy to check that $\sigma_f$ is a nonzero endomorphism of the Lie algebra $W_n$. Thus we obtain a semigroup homomorphism
\begin{equation}\label{semi-hom1}
\xi:{\rm JT}_n\to{\rm End\,}W_n\bs\{0\}\mbox{ sending }
f\mapsto\si_f.
\end{equation}
Let $\tau\in{\rm Aut\,}A_n$. Then we have a Jacobi tuple $f_\tau:=(\tau(x_1),...,\tau(x_n))\in{\rm JT}_n$, thus $\tau$ corresponds to a nonzero endomorphism $\si_{f_\tau}\in{\rm End\,}W_n\bs\{0\}$, and we obtain a semigroup homomorphism
\begin{equation}\label{semi-hom2}
\zeta:{\rm Aut\,}A_n\to{\rm End\,}W_n\bs\{0\}\mbox{ sending }
\tau\mapsto\si_{f_\tau}.
\end{equation}

Now we can summarize our main results as follows.

\begin{theo}\label{main-theo}\begin{itemize}
\item[\rm(1)] The Jacobi conjecture is equivalent to the Witt algebra's conjecture.
\item[\rm(2)] The map in \eqref{semi-hom1} is a semigroup isomorphism $\xi:{\rm JT}_n\cong{\rm End\,}W_n\bs\{0\}$.
\item[\rm(3)] The map in \eqref{semi-hom2} induces a group isomorphism $\zeta:{\rm Aut\,}A_n\cong{\rm Aut\,}W_n$.
\item[\rm(4)] The group ${\rm Aut\,}W_2$ is generated by $s,\tau_a,\psi_p$ for $a\in\C^\times,$ $p\in\Z^{\ge0}$, where,
\begin{eqnarray*}
&\!\!\!\!\!\!\!\!\!&s(x_1^ix_2^j\partial_1+x_1^kx_2^\ell\partial_2)=
x_2^ix_1^j\partial_2+x_2^kx_1^\ell\partial_1,\\
&\!\!\!\!\!\!\!\!\!&\tau_a(x_1^ix_2^j\partial_1+x_1^kx_2^\ell\partial_2)
=a^{i}x_1^ix_2^j\partial_1+a^kx_1^kx_2^\ell\partial_2,\\
&\!\!\!\!\!\!\!\!\!&\psi_p(x_1^ix_2^j\partial_1+x_1^kx_2^\ell\partial_2)=
x_1^i(x_2+x_1^p)^j(\partial_1-x_1^{p-1}\partial_2)+x_1^k(x_2+x_1^p)^\ell\partial_2,
\end{eqnarray*} for $i,j,k,\ell\in\Z^{\ge0}$.
\end{itemize}
\end{theo}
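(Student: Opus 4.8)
The plan is to prove (2) first and then read off (1), (3) and (4) from the resulting semigroup isomorphism. Since the excerpt already records that $\xi$ is a semigroup homomorphism, for (2) only bijectivity is at issue. Injectivity is immediate: if $\sigma_f=\sigma_g$, evaluating \eqref{sigma-f} at $k_1=\cdots=k_n=0$ gives $\theta_j^f=\theta_j^g$ for all $j$, and evaluating at $x_i\partial_j$ gives $f_i\theta_j^f=g_i\theta_j^g$; as the $\theta_j^f$ are nonzero, $f_i=g_i$, so $f=g$.

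Surjectivity is the heart of the matter, and here I would use the embedding $W_n\subset\bar W_n={\rm Der}\,\bar A_n$, a free $\bar A_n$-module of rank $n$ on $\partial_1,\dots,\partial_n$. Fix $0\neq\phi\in{\rm End}\,W_n$; as $W_n$ is simple, $\ker\phi$ is a proper ideal, hence $0$, so $\phi$ is injective. Put $\theta_j:=\phi(\partial_j)$. The relations $[\partial_i,\partial_j]=0$ and $[\partial_i,x_j\partial_k]=\delta_{ij}\partial_k$ give $[\theta_i,\theta_j]=0$ and $[\theta_i,\phi(x_j\partial_k)]=\delta_{ij}\theta_k$, so the $\phi(x_j\partial_k)$ realise a copy $\phi(\mathfrak{gl}_n)$ of $\mathfrak{gl}_n$ acting on $\sum_j\C\theta_j$ as the natural module, with $\phi(\sum_j x_j\partial_j)$ acting by the scalar $-1$ on each $\theta_j$. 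The crux, and what I expect to be the main obstacle, is to show the commuting derivations $\theta_1,\dots,\theta_n$ are linearly independent over $\bar A_n$ (equivalently, that their common kernel in $\bar A_n$ is only $\C$); I would extract this from the $\mathfrak{gl}_n$-action and the Euler eigenvalue above together with injectivity. Granting it, the $\theta_j$ form an $\bar A_n$-basis of $\bar W_n$; their dual $1$-forms are closed (the $\theta_j$ commute) hence exact, yielding $f_1,\dots,f_n\in\bar A_n$ with $\theta_j(f_i)=\delta_{ij}$. A short computation with the $\mathfrak{gl}_n$-relations fixes $\phi(x_i\partial_j)=f_i\theta_j$, and an induction on $|\alpha|$ through $[\partial_k,x^\alpha\partial_j]=\alpha_k x^{\alpha-e_k}\partial_j$ upgrades this to $\phi(x^\alpha\partial_j)=f^\alpha\theta_j$ for every $\alpha$. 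It remains to see $f=(f_1,\dots,f_n)\in{\rm JT}_n$: writing $\theta_j=\sum_\ell t_{\ell j}\partial_\ell$ with $t_{\ell j}\in A_n$, the membership $\phi(x^\alpha\partial_j)=f^\alpha\theta_j\in W_n$ forces $f^\alpha t_{\ell j}\in A_n$ for all $\alpha$, so taking $\alpha=Ne_i$ gives $f_i^N t_{\ell j}\in A_n$ for all $N$; if some $f_i=p/q\notin A_n$ in lowest terms this would force $q^N\mid t_{\ell j}$ for all $N$, hence $\theta_j=0$, against injectivity, so each $f_i\in A_n$. Then $\theta_j(f_i)=\delta_{ij}$ reads $MT=I$ for the polynomial matrices $M=(\partial f_i/\partial x_\ell)$ and $T=(t_{\ell j})$, forcing $\det M\in\C^\times$; thus $f\in{\rm JT}_n$, and since $\phi$ and $\sigma_f$ agree on every $x^\alpha\partial_j$ we conclude $\phi=\xi(f)$.

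For (1) and (3) I would pass to units. An $f\in{\rm JT}_n$ is invertible in the semigroup ${\rm JT}_n$ precisely when $x_i\mapsto f_i$ is an automorphism of $A_n$, i.e. when $f=f_\tau$ for some $\tau\in{\rm Aut}\,A_n$; the Jacobi conjecture says exactly that every element of ${\rm JT}_n$ is such a unit. Dually, a nonzero endomorphism of $W_n$ is invertible in ${\rm End}\,W_n\bs\{0\}$ iff it is bijective, i.e. lies in ${\rm Aut}\,W_n$, and the Witt algebra's conjecture says every element is such a unit. Since the isomorphism $\xi$ of (2) carries units to units, the two conjectures are equivalent, proving (1). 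Moreover $\tau\mapsto f_\tau$ is an injective group homomorphism ${\rm Aut}\,A_n\to{\rm JT}_n$ onto the unit group, and $\zeta=\xi\circ(\tau\mapsto f_\tau)$; restricting the isomorphism $\xi$ to unit groups therefore yields the group isomorphism $\zeta:{\rm Aut}\,A_n\cong{\rm Aut}\,W_n$ of (3).

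Part (4) specialises (3) to $n=2$. By the Jung--van der Kulk theorem every automorphism of $A_2$ is tame, so ${\rm Aut}\,A_2$ is generated by the swap $s_1$, the scalings $\tau_a$ and the elementary maps $\psi_p$ for $a\in\C^\times$, $p\in\Z^{\ge0}$; applying $\zeta$, their images $\sigma_{f_{s_1}},\sigma_{f_{\tau_a}},\sigma_{f_{\psi_p}}$ generate ${\rm Aut}\,W_2$ and are the maps $s,\tau_a,\psi_p$ of the statement. It then remains to evaluate \eqref{sigma-f} at the three coordinate tuples $f_{s_1}=(x_2,x_1)$, $f_{\tau_a}=(ax_1,x_2)$ and $f_{\psi_p}=(x_1,x_2+x_1^p)$, a direct computation of the associated cofactor derivations $\theta_j$ that produces the explicit formulas displayed.
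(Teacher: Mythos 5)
Your architecture for (2) mirrors the paper's own route (embed into $\bar W_n={\rm Der}\,\bar A_n$, prove $\bar A_n$-linear independence of the $\theta_j$, propagate the formula $\phi(x^\alpha\partial_j)=f^\alpha\theta_j$ to all monomials, then force polynomiality and the Jacobian condition), but the two steps on which everything hinges are precisely the ones left unproved or incorrect. First, the $\bar A_n$-linear independence of $\theta_1,\dots,\theta_n$ --- which you rightly call the crux --- is never established: you only propose to ``extract'' it from the $\mathfrak{gl}_n$-action, the Euler eigenvalue, and injectivity. The paper's proof of this (its Lemma 2.1) cannot be run inside $\mathfrak{gl}_n$: it brackets against the images of $x_{i}^{k+2}\partial_{i}$ for \emph{all} $k\ge0$, shows the resulting coefficients $c_{jk}$ are annihilated by every $\theta_\ell$, and then exhibits a nonzero element $x_{i}^{k_1+k_2+1}\partial_{i}$ killed by $\sigma$, contradicting ${\rm Ker}\,\sigma=0$. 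Injectivity plus the $\mathfrak{gl}_n$-module structure only gives $\C$-linear independence of the $\theta_j$, which is far weaker (e.g.\ $\partial_1$ and $x_1\partial_1$ are $\C$-independent but $\bar A_n$-dependent), so your sketch has no substitute for this high-degree input.

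Second, the step ``their dual $1$-forms are closed (the $\theta_j$ commute) hence exact'' is false over $\bar A_n$: closed rational $1$-forms need not be exact, $dx_1/x_1$ being the standard counterexample. Concretely, $x_1\partial_1,\dots,x_n\partial_n$ is a commuting $\bar A_n$-basis of $\bar W_n$, yet no $f_i\in\bar A_n$ satisfy $x_j\partial_j(f_i)=\delta_{ij}$; so the existence of the $f_i$ cannot follow from commutativity plus the basis property alone. It must be extracted, as the paper does in its Lemma 2.2 (Claims 1 and 2), from the expansions of $\phi(x_i\partial_j)$ and $\phi(x_i^2\partial_j)$ in the $\theta_l$-basis using the full bracket relations, which simultaneously yield $\phi(x_i\partial_j)=f_i\theta_j$ and $\theta_j(f_i)=\delta_{ij}$. (A smaller issue: your induction via $[\partial_k,x^\alpha\partial_j]=\alpha_k x^{\alpha-e_k}\partial_j$ pins down $\phi(x^\alpha\partial_j)$ only up to adding an element of $\sum_l\C\,\theta_l$, since the common kernel of the $\theta_k$ in $\bar A_n$ is $\C$; the paper instead climbs degrees with brackets such as $[x_i^k\partial_i,\,x_i^2\partial_j]$, which leave no such ambiguity.) The remainder of your proposal is sound: the polynomiality argument $f_i^N t_{\ell j}\in A_n$, the identity $MT=I$ forcing $\det M\in\C^\times$, and especially the derivation of (1) and (3) by matching unit groups of the two monoids --- a genuinely cleaner route than the paper's, which invokes Zhao's theorem for one direction of (1) and proves surjectivity of $\sigma_{f_\tau}$ by an explicit generating-set computation for (3). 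But all of that is conditional on (2), whose proof as written has the two gaps above.
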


Finally we remark that the isomorphism in Theorem \ref{main-theo}\,(3) may provide a possible way
to study automorphisms of $A_n$ using the theory of Lie algebras. This is also our goal in a sequel.
%
%
%

\section{Some lemmas}
\setcounter{equation}{0} Let $n$ be a positive integer (we assume $n\ge2$). Denote by $\underline n$, $\Z^{\ge0}$ and  $\C^\times$ the set $\{1, 2,\cdots, n\}$,  the set of all non-negative integers and the set of non-zero complex numbers, respectively.
Let $A_n=\C[x_1, x_2,\cdots,x_n]$ be the polynomial algebra of $n$ variables $x_1, x_2, \cdots, x_n$ over complex field $\C$. Denote $W_n={\rm Der}\,A_n$, the derivation algebra of $A_n$.

It is well known that $W_n$ is the  free $A_n$-module of rank $n$ with
basis $\{\ptl_i\,|\,i\in\underline n\}$: \begin{equation*}W_n=\OPLUS{i\in\ul n} A_n\ptl_i=\left.\left\{\mbox{$\sum\limits_{i=1}^n$}P_i\partial_{i}\,\right|\, P_i\in A_n\right\},\mbox{ \ where \ }\partial_i=\frac{\partial}{\partial x_i}.\end{equation*} Let $\bar A_n=\C(x_1, x_2, \cdots, x_n)$ be the quotient field of $A_n$, and $\bar W_n={\rm Der}\,\bar A_n\,$  the corresponding derivation algebra of $\bar A_n$ (regarding $\bar A_n$ as a $\C$-algebra). Then obviously, $\bar W_n$ is the $n$-dimensional $\bar A_n$-vector space with basis $\{\ptl_i\,|\,i\in\underline n\}$: \begin{equation*}\bar W_n=\OPLUS{i\in\ul n} \bar A_n\ptl_i=\left.\left\{\mbox{$\sum\limits_{i=1}^n$}P_i\partial_{i}\,\right|\, P_i\in \bar A_n\right\}\mbox{, \ and \ }W_n\subset \bar W_n.\end{equation*}

Note that the space $\bar W_n\oplus\bar A_n$ is a Lie subalgebra of the Weyl type Lie algebra $\bar{\mathscr W}_n$, where $\bar{\mathscr W}_n$ is the Lie algebra consisting of all differential operators on $\bar A_n$.
In particular, for any $a_1,a_2\in\bar A_n,D_1,D_2\in\bar W_n$, one has
\begin{equation}\label{eq-formula}
[a_1D_1,a_2D_2]=[a_1D_1,a_2]D_2+a_2[a_1D_1,D_2]=a_1D_1(a_2)D_2-a_2D_2(a_1)D_1+a_1a_2[D_1,D_2].\end{equation}

Let $\sigma\in {\rm End}\,W_n\bs\{0\}$ (the set of nonzero endomorphisms of
the  Lie algebra $W_n$). Then Ker$\,\sigma=0$ as the ideal generated by a single nonzero element in ${\rm Ker\,}\sigma$ would be $W_n$ itself. Denote
\begin{equation}\label{th-i}\mbox{$\th_i=\si(\ptl_i)\in W_n$ \ for \ $i\in\ul n$.}\end{equation}
The following is the technical lemma in obtaining our main results.
\begin{lemm}\label{lem-linear-inde}
 The elements $\th_1,...,\th_n$ are $\bar A_n$-linear independent.
\end{lemm}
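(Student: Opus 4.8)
The plan is to argue by contradiction: from a putative $\bar A_n$-linear dependence I would extract a relation whose coefficients are common first integrals of the $\theta_i$, and then try to rule such integrals out. First I would record the structural relations obtained by applying the homomorphism $\sigma$ to relations in $W_n$. Since $[\partial_i,\partial_j]=0$ we get $[\theta_i,\theta_j]=0$, so $\theta_1,\dots,\theta_n$ are pairwise commuting derivations of $\bar A_n$; writing $E_{pq}:=\sigma(x_p\partial_q)$ and applying $\sigma$ to $[x_p\partial_q,\partial_k]=-\delta_{pk}\partial_q$ gives $[E_{pq},\theta_k]=-\delta_{pk}\theta_q$. These last relations exhibit the $\mathbb C$-span $\langle\theta_1,\dots,\theta_n\rangle$ as the standard $\mathfrak{gl}_n$-module under $\sigma(\mathfrak{gl}_n)$; since $\mathrm{Ker}\,\sigma=0$ this module is faithful, so the $\theta_i$ are at least $\mathbb C$-linearly independent. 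The whole point is to upgrade this to $\bar A_n$-independence.

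Next I would let $r=\mathrm{rank}_{\bar A_n}\{\theta_i\}$ and suppose $r<n$; after reindexing, $\theta_1,\dots,\theta_r$ is a maximal independent subset and $\theta_{r+1}=\sum_{k\le r}a_k\theta_k$ with $a_k\in\bar A_n$. Applying $\mathrm{ad}\,\theta_i$ to this identity and using $[\theta_i,\theta_k]=0$ together with \eqref{eq-formula} (the $a_1a_2[D_1,D_2]$ term drops out), one is left with $\sum_{k\le r}\theta_i(a_k)\theta_k=0$; independence of $\theta_1,\dots,\theta_r$ forces $\theta_i(a_k)=0$ for every $i$. Hence each $a_k$ lies in the field $K:=\bigcap_i\ker(\theta_i|_{\bar A_n})$ of common first integrals. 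Note already a clean dichotomy: if all $a_k\in\mathbb C$ the relation would be a $\mathbb C$-linear dependence, contradicting the first paragraph, so some $a_k$ is nonconstant and $K\supsetneq\mathbb C$ — equivalently $\mathrm{tr.deg}_{\mathbb C}K=n-r\ge 1$, as the commuting family $\{\theta_i\}$ spans an involutive distribution (by \eqref{eq-formula}) and algebraic Frobenius applies. Moreover $\sigma(\mathfrak{gl}_n)$ preserves $K$: for $a\in K$, $\theta_j(E_{pq}(a))=E_{pq}(\theta_j a)-[E_{pq},\theta_j](a)=\delta_{pj}\theta_q(a)=0$. Finally, applying $\mathrm{ad}\,E_{pq}$ and $\mathrm{ad}\,\sigma(E)$ (with $E=\sum_p x_p\partial_p$) to the relation and comparing coefficients against $\theta_1,\dots,\theta_r$ yields the covariance relations $E_{pq}(a_k)=a_p\delta_{kq}$ and $\sigma(E)(a_k)=0$; that is, the tuple $(a_1,\dots,a_r)$ transforms in the standard representation.

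It is convenient to repackage this: the relation space $R=\{\,b\in K^n:\sum_k b_k\theta_k=0\,\}$ is a $\mathbb C$-subspace of $K\otimes_{\mathbb C}\mathbb C^n$ stable under the $\mathfrak{gl}_n$-action combining the derivation action of $\sigma(\mathfrak{gl}_n)$ on $K$ with the standard action on $\mathbb C^n$, and $R\cap\mathbb C^n=0$ exactly because the $\theta_i$ are $\mathbb C$-independent; the lemma amounts to $R=0$. Were the action $K$-linear — i.e. were $\sigma(\mathfrak{gl}_n)$ to act trivially on $K$ — this would be immediate, since $K\otimes_{\mathbb C}\mathbb C^n$ is an (absolutely) irreducible $\mathfrak{gl}_n$-module over $K$ and so has no proper nonzero stable $K$-subspace. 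The obstruction is precisely the nontrivial derivation twist visible in $E_{pq}(a_q)=a_p$.

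Defeating this derivation twist is the step I expect to be the main obstacle. The infinitesimal relations assembled above are mutually consistent: they are exactly those of the projective embedding $\mathfrak{sl}_{r+1}\hookrightarrow\mathrm{Der}_{\mathbb C}\,\mathbb C(a_1,\dots,a_r)$, so no contradiction can be squeezed out of the bracket relations alone, and one must feed in the \emph{global polynomial} structure rather than just the Lie structure. Concretely I would clear denominators to rewrite the dependence as $\sum_k P_k\theta_k=0$ with $P_k\in A_n$, and then exploit that $\sigma(E)$ is an honest derivation of the polynomial algebra $A_n$ whose $\mathrm{ad}$-eigenvalues on $\sigma(W_n)$ are the unbounded Euler degrees, in order to control a minimal such polynomial relation and force every $a_k$ to be a $\sigma(E)$-invariant of degree zero, hence constant — contradicting the $\mathbb C$-independence of $\theta_1,\dots,\theta_{r+1}$ and thereby giving $K=\mathbb C$ and $r=n$. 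Pinning down this last finiteness argument, which must use polynomiality in an essential way since the purely algebraic relations do not suffice, is where the real work lies. \QED
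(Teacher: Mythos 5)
Your proposal stops short of a proof. Everything up through the covariance relations is essentially correct (the commutativity $[\theta_i,\theta_j]=0$, the $\mathbb C$-independence of the $\theta_i$ from $\mathrm{Ker}\,\sigma=0$, and the fact that the coefficients $a_k$ of a dependence $\theta_{r+1}=\sum_{k\le r}a_k\theta_k$ satisfy $\theta_i(a_k)=0$), and this much coincides with the opening moves of the paper's own argument. But the contradiction itself is never derived: you explicitly defer ``the real work'' to an unexecuted degree/finiteness argument on a minimal polynomial relation, and that plan is not obviously completable. So as it stands there is a genuine gap, and it sits exactly at the crux of the lemma.

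The missing idea is that you do not need polynomiality of the coefficients at all, only more of the Lie algebra than $\mathfrak{gl}_n$; your diagnosis that ``no contradiction can be squeezed out of the bracket relations alone'' is what leads you astray. The paper brackets the dependence against $\sigma$-images of higher-degree one-variable elements. Set $D_k=\frac1{k+2}\sigma(x_{i_1}^{k+2}\partial_{i_1})$, where $i_1$ is the dependent index ($i_1=r+1$ in your notation) and $I_0$ indexes the maximal independent subset. Since $[\partial_j,x_{i_1}^{k+2}\partial_{i_1}]=0$ for $j\in I_0$, each $D_k$ commutes with every $\theta_j$, $j\in I_0$; hence bracketing $\theta_{i_1}=\sum_{j\in I_0}b_j\theta_j$ with $D_k$ gives $\sigma(x_{i_1}^{k+1}\partial_{i_1})=\sum_{j\in I_0}c_{jk}\theta_j$ with $c_{jk}=-D_k(b_j)$, and then bracketing this with each $\theta_\ell$, $\ell\in I_0$, and using the independence of $\{\theta_j\mid j\in I_0\}$ gives $\theta_\ell(c_{jk})=0$. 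Consequently $[\sigma(x_{i_1}^{k_1+1}\partial_{i_1}),\sigma(x_{i_1}^{k_2+1}\partial_{i_1})]=0$, whereas this bracket must equal $(k_2-k_1)\sigma(x_{i_1}^{k_1+k_2+1}\partial_{i_1})$; taking $k_1\ne k_2$ forces $\sigma$ to kill a nonzero element of $W_n$, contradicting $\mathrm{Ker}\,\sigma=0$ (which follows from simplicity of $W_n$). In short, the one-variable Witt subalgebra in the variable $x_{i_1}$, together with injectivity of $\sigma$, supplies the contradiction that your $\mathfrak{gl}_n$-covariance setup cannot, and no control of denominators, degrees, or Euler weights is required.
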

\begin{proof}
Suppose conversely that there exists $I_0\subsetneq \underline n$ such that $\{\th_i\mid i\in I_0\}$ forms a maximal $\bar A_n$-linearly independent subset of $\{\th_i\mid i\in\underline n\}$. Choose any $i_1\in\underline n\bs I_0$, and assume that \begin{equation}\label{eq-th_i}
\th_{i_1}=\mbox{$\sum\limits_{j\in I_0}$}b_j\th_j\mbox{ \ for some $b_{j}\in \bar A_n$.}
\end{equation}
Let $k\in\Z^{\ge0}$ and denote $D_k=\frac1{k+2}\si(x_{i_1}^{k+2}\ptl_{i_1})$. We have
\begin{equation}\label{eq-[dj]}[\th_j,D_k]=\si\Big(\Big[\ptl_j,\frac1{k+2}x_{i_1}^{k+2}\ptl_{i_1}\Big]\Big)=0\mbox{ \ for any \ }j\in I_0.\end{equation}
Applying $\si$ to $x_{i_1}^{k+1}\ptl_{i_1}=[\ptl_{i_1},\frac1{k+2}x_{i_1}^{k+2}\ptl_{i_1}]$, by \eqref{eq-formula}--\eqref{eq-[dj]},
we obtain
\begin{equation}\label{eq}
\sigma(x_{i_1}^{k+1}\partial_{i_1})=[\th_{i_1}, D_k]=\mbox{$\sum\limits_{j\in I_0}$}[b_j\th_j, D_k]=\mbox{$\sum\limits_{j\in I_0}$}c_{jk }\th_j,
\mbox{ \  where $c_{jk}=-D_k(b_j)\in\bar A_n$.}\end{equation}
Using this and the fact that $[\th_i,\th_j]=0$ for $i,j\in\ul n$,  we have
\begin{equation}\label{eq++1}
0=\sigma([\ptl_\ell,x_{i_1}^{k+1}\partial_{i_1}])=\mbox{$\sum\limits_{j\in I_0}$}
\left[\th_\ell,c_{jk }\th_j\right]
=\mbox{$\sum\limits_{j\in I_0}$}\th_\ell(c_{jk})\th_j\mbox{ \ for any $\ell\in I_0$}.\end{equation}
This together with
the $\bar A_n$-linear independence of $\{\th_i\mid i\in I_0\}$ implies that
$\th_\ell(c_{jk})=0$ for all $j, \ell\in I_0$ and $k\in\Z^{\ge0}$. From this and \eqref{eq-formula}, we obtain
$$(k_2-k_1)\sigma(x_{i_1}^{k_1+k_2+1}\partial_{i_1})=[\sigma(x_{i_1}^{k_1+1}\partial_{i_1}),  \sigma(x_{i_1}^{k_2+1}\partial_{i_1})]=0\mbox{ \ for any $k_1, k_2\in\Z^{\ge0}$},$$  contradicting the fact that ${\rm Ker}\,\sigma=0$.
\end{proof}

\begin{lemm}\label{lemm--2}For  any $i,j\in \underline n$ and $k=1,2$, there exists $a_i\in\bar A_n$ such that
 $\sigma(x_i^k\partial_j)=a_i^k\th_j.$
\end{lemm}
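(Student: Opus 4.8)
The plan is to expand each $\sigma(x_i^k\partial_j)$ in the $\bar A_n$-basis $\{\theta_1,\dots,\theta_n\}$ of $\bar W_n$ provided by Lemma \ref{lem-linear-inde}, and then to pin down the coefficients by applying $\sigma$ to a short list of bracket identities in $W_n$, reading off constraints via \eqref{eq-formula} and the linear independence of the $\theta_\ell$. The one structural fact I would isolate at the outset is that a rational function annihilated by every $\theta_m$ must be a scalar: since $\{\theta_1,\dots,\theta_n\}$ spans $\bar W_n$ over $\bar A_n$, it expresses each $\partial_\ell$, so $\theta_m(p)=0$ for all $m$ forces $\partial_\ell(p)=0$ for all $\ell$, whence $p\in\C$. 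This is the device that converts the differential relations below into genuine constraints.

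First I would record, as an identity of operators, $[\partial_m,x_i\partial_j]=\delta_{mi}\partial_j$, apply $\sigma$, and expand using \eqref{eq-formula} together with $\theta_\ell(1)=0$ and $[\theta_m,\theta_\ell]=0$ (the latter from $[\partial_m,\partial_\ell]=0$). Writing $\sigma(x_i\partial_j)=\sum_\ell p_{ij}^\ell\theta_\ell$, linear independence turns this into $\theta_m(p_{ij}^\ell)=\delta_{mi}\delta_{\ell j}$. By the scalar device, every off-diagonal coefficient $p_{ij}^\ell$ with $\ell\neq j$ is then a constant, while the diagonal coefficient $a_{ij}:=p_{ij}^j$ satisfies $\theta_m(a_{ij})=\delta_{mi}$. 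To remove the stray constants and merge the $a_{ij}$ into a single $a_i$, I would feed the degree-zero identities $[x_i\partial_i,x_i\partial_j]=x_i\partial_j$ (for $j\neq i$) and $[x_j\partial_i,x_i\partial_i]=x_j\partial_i$ through $\sigma$; expanding both sides in the $\theta$-basis and substituting the relations $\theta_m(p_{ij}^\ell)=\delta_{mi}\delta_{\ell j}$, comparison of coefficients forces $a_{ij}=a_{ii}=:a_i$ and kills every off-diagonal constant, giving $\sigma(x_i\partial_j)=a_i\theta_j$. Here it is convenient to treat $j\neq i$ first, and then obtain the diagonal case from the second identity, whose input $\sigma(x_j\partial_i)=a_j\theta_i$ is already known.

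For $k=2$ I would repeat the pattern one degree higher. The first-order relation $[\partial_m,x_i^2\partial_j]=2\delta_{mi}x_i\partial_j$, combined with the $k=1$ result, yields $\theta_m(s_{ij}^\ell)=2\delta_{mi}a_i\delta_{\ell j}$ for $\sigma(x_i^2\partial_j)=\sum_\ell s_{ij}^\ell\theta_\ell$; since $\theta_m(a_i^2)=2a_i\delta_{mi}$, the off-diagonal $s_{ij}^\ell$ are constants and $s_{ij}^j-a_i^2$ is a constant. Then the degree-one identities $[x_i\partial_i,x_i^2\partial_j]=2x_i^2\partial_j$ for $j\neq i$, and the self-identity $[x_i\partial_i,x_i^2\partial_i]=x_i^2\partial_i$ for the diagonal term, determine all these constants to be zero upon comparing coefficients against the independent $\theta_\ell$, yielding $\sigma(x_i^2\partial_j)=a_i^2\theta_j$.

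The main obstacle throughout is precisely these $\C$-valued constant terms: they are invisible to the first-order (derivation) relations, which only control $\theta_m$-derivatives. The whole point of invoking the degree-zero and degree-one Lie identities is that, read against the linearly independent basis $\{\theta_\ell\}$, the coefficient of each off-diagonal $\theta_\ell$ is forced to vanish while the diagonal coefficient is forced to equal $a_i^k$. I would take particular care that the diagonal arguments are not circular: in each case the bracket identity, together with the already-established derivative information on the coefficients, determines them uniquely rather than presupposing the answer.
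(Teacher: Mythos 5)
Your proposal is correct, and it reaches the lemma by a route that differs from the paper's in one genuine respect. You share the paper's skeleton: expand $\sigma(x_i^k\partial_j)$ in the basis $\{\th_1,\dots,\th_n\}$, bracket with $\partial_m=\sigma^{-1}$-preimages' images $\th_m$ to get the derivative constraints $\th_m(p_{ij}^\ell)=\delta_{mi}\delta_{\ell j}$ and $\th_m(s_{ij}^\ell)=2\delta_{mi}a_i\delta_{\ell j}$ (these are exactly the paper's \eqref{eq--1} for $k=1,2$), then use further brackets plus Lemma \ref{lem-linear-inde} to pin down the coefficients. But your mechanism for that last stage is different. Your ``scalar device'' --- $\bigcap_m\ker\th_m=\C$, valid because the $\th_\ell$, being $n$ independent vectors in the $n$-dimensional $\bar A_n$-space $\bar W_n$, also span it and hence express every $\partial_\ell$ --- appears nowhere in the paper: the paper uses only the linear \emph{independence} of the $\th_\ell$, substitutes the derivative relations into the fully general brackets $[\sigma(x_{i_1}\partial_{j_1}),\sigma(x_{i_2}\partial_{j_2})]$ and $[\sigma(x_i^2\partial_{j_1}),\sigma(x_i\partial_{j_2})]$, and extracts purely algebraic identities (\eqref{eq-1-1} and \eqref{eq-main}) whose index specializations kill the off-diagonal coefficients outright, never passing through the statement that they are constants. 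Your route buys lighter bookkeeping: the ambiguity is reduced to finitely many scalars before any degree-preserving bracket is computed, after which just two targeted identities per degree ($[x_i\partial_i,x_i\partial_j]=x_i\partial_j$ and $[x_j\partial_i,x_i\partial_i]=x_j\partial_i$ for $k=1$; $[x_i\partial_i,x_i^2\partial_j]=2x_i^2\partial_j$ and $[x_i\partial_i,x_i^2\partial_i]=x_i^2\partial_i$ for $k=2$) finish the job --- expanding each of these via \eqref{eq-formula} and comparing coefficients against the independent $\th_\ell$ does force $a_{ij}=a_{ii}$ and all stray constants to zero, with no circularity in the diagonal cases, as you anticipated. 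The paper's route buys freedom from the spanning property (only independence is invoked), at the cost of a heavier cascade of index specializations. One shared caveat: your diagonal identities need some $j\ne i$, so like the paper's proof your argument requires $n\ge2$, which the paper assumes throughout Section 2.
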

\begin{proof}

For any $k\in\Z^{\ge0}$, assume that $\sigma(x_i^k\partial_j)=\sum_{l=1}^n a_{ijl}^{(k)}\th_l$ for some $a_{ijl}^{(k)}\in \bar A_n$. For $i,j,m\in\ul n$ and $1\le k\in\Z^{\geq0}$, we have
\begin{eqnarray*}
-\delta_{i,m}\mbox{$\sum\limits_{l=1}^n$} ka_{ijl}^{(k-1)}\th_l&=&-\delta_{i,m}k\sigma(x_i^{k-1}\partial_j)=\sigma([x^k_i\partial_j,\partial_m])\\ &=&[\sigma(x^k_i\partial_j), \sigma(\partial_m)]=\mbox{$\sum\limits_{l=1}^n$}[a_{ijl}^{(k)}\th_l,\th_m]\\ &=&-\mbox{$\sum\limits_{l=1}^n$}\th_m(a^{(k)}_{ijl})\th_l
\end{eqnarray*} by \eqref{eq-formula} and $[\th_i,\th_j]=0$ for $i,j\in\underline n$.
 Hence  by Lemma \ref{lem-linear-inde},  \begin{equation}\label{eq--1}\th_m(a_{ijl}^{(k)})=\delta_{im}ka_{ijl}^{(k-1)}
\mbox{ \ for all $i,j,l,m\in \underline n$ and $1\le k\in\Z^{\ge0}$.}\end{equation}  In particular,
\begin{equation}\label{d(a_{ijl}}
\th_m(a_{ijl}^{(1)})=\delta_{im}\delta_{jl}\mbox{ \ for all } i,j,l,m\in\underline n,
\end{equation} since  $a^{(0)}_{ijl}=\delta_{jl}$ (the Kronecker delta). For simplicity, denote $a_{ijl}=a_{ijl}^{(1)}$ for any $i,j,l\in\underline n$.
\begin{clai}
\label{claim1}We have $\sigma(x_i\partial_j)=a_{i}\th_j$ and $\th_j(a_i)=\delta_{ij}$ for $i,j\in\underline n$, $\mbox{where }a_i
=a_{ill}\mbox{ for all }l\in\ul n.$
\end{clai}

Using \eqref{eq-formula} and \eqref{d(a_{ijl}}, for arbitrary   $i_1,i_2,j_1,j_2\in\underline n$ we have
\begin{eqnarray*}
&&\mbox{$\sum\limits_{l=1}^n$}(\delta_{j_1i_2}a_{i_1j_2l}-\delta_{j_2i_1}a_{i_2j_1l})\th_l\\ &=&\delta_{j_1i_2}\sigma(x_{i_1}\partial_{j_2})
-\delta_{j_2i_1}\sigma(x_{i_2}\partial_{j_1})=\sigma([x_{i_1}\partial_{j_1},x_{i_2}\partial_{j_2}])\\ &=&[\sigma(x_{i_1}\partial_{j_1}), \sigma(x_{i_2}\partial_{j_2})]
=\mbox{$\sum\limits_{l_1,l_2=1}^n$}
[a_{i_1j_1l_1}\th_{l_1},a_{i_2j_2l_2}\th_{l_2}]\\
&=&\mbox{$\sum\limits_{l_1,l_2=1}^n$}\big(a_{i_1j_1l_1}\th_{l_1}(a_{i_2j_2l_2})\th_{l_2}
-a_{i_2j_2l_2}\th_{l_2}(a_{i_1j_1l_1})\th_{l_1}\big)\\
&=&\mbox{$\sum\limits_{l_1,l_2=1}^n$}(\delta_{l_1i_2}\delta_{j_2l_2}a_{i_1j_1l_1}\th_{l_2}
-\delta_{l_2i_1}\delta_{j_1l_1}a_{i_2j_2l_2}\th_{l_1})\\
&=&a_{i_1j_1i_2}\th_{j_2}-a_{i_2j_2i_1}\th_{j_1}.
\end{eqnarray*}
By Lemma \ref{lem-linear-inde} again, \begin{equation}\label{eq-1-1}\delta_{j_1i_2}a_{i_1j_2l}-\delta_{j_2i_1}a_{i_2j_1l}=\delta_{j_2l}a_{i_1j_1i_2}-\delta_{j_1l}a_{i_2j_2i_1} \quad {\rm for\ any\ } i_1,i_2,j_1,j_2,l\in\underline n. \end{equation}
Setting $j_1=j_2=j$ in \eqref{eq-1-1}, one has $$\delta_{ji_2}a_{i_1jl}-\delta_{ji_1}a_{i_2jl}=\delta_{jl}(a_{i_1ji_2}-a_{i_2ji_1})\quad {\rm for\ any\ } i_1,i_2,j,l\in\underline n,$$ which is equivalent to
\begin{equation}\label{eq-1}
\delta_{ji_2}a_{i_1jl}-\delta_{ji_1}a_{i_2jl}=0\quad {\rm for\ any\ } i_1,i_2,j\neq l\in\underline n
\end{equation}
and \begin{equation}\label{eq-2}
\delta_{ji_2}a_{i_1jj}-\delta_{ji_1}a_{i_2jj}=a_{i_1ji_2}-a_{i_2ji_1}\quad {\rm for\ any\ } i_1,i_2,j\in\underline n.
\end{equation}
Taking $i_2=j$  in \eqref{eq-1} gives $a_{i_1jl}=0$ for $i_1\neq j$ and $l\neq j$; taking $i_1=j$ in \eqref{eq-2} gives $a_{jji_2}=0$ for $i_2\neq j$. It follows
that
\begin{equation}\label{eq_{ijl}}
a_{ijl}=0\quad  {\rm for\ any }\ i, j\neq l\in\underline n.
\end{equation}
On the other hand,  it follows from  \eqref{eq-1-1} for the case $l=j_2\neq j_1$ that \begin{equation}\delta_{j_1i_2}a_{i_1j_2j_2}-\delta_{j_2i_1}a_{i_2j_1j_2}= a_{i_1j_1i_2},  \end{equation}
which together with \eqref{eq_{ijl}} gives \begin{equation}\label{a_{i_1jj}}a_{i_1j_1j_1}=a_{i_1j_2j_2}  \quad {\rm for }\ i, j_1\neq j_2\in\underline n.\end{equation}
Hence by \eqref{eq_{ijl}} and \eqref{a_{i_1jj}}, the expression of $\sigma(x_i\partial_j)$ can be rewritten as \begin{equation}\label{sigma(x_ipartial_j)}\sigma(x_i\partial_j)=a_{i}\th_j\mbox{, \ where }a_i
=a_{ij_1j_1}\mbox{ for any }j_1\in\ul n, \end{equation} and whence \eqref{d(a_{ijl}} becomes
 \begin{equation}\label{th_j(a_i)=}\th_j(a_i)=\delta_{ij}\quad {\rm for\ any\ } i,j\in\underline n.\end{equation} So the Claim \ref{claim1} is true.

\begin{clai}\label{claim2}We have
$\sigma(x_i^2\partial_j)=a_i^2\th_j$ for all $i,j\in\underline n.$
\end{clai}
 By \eqref{eq--1} and \eqref{sigma(x_ipartial_j)}, \begin{equation}\label{eq--1k=2}\th_m(a_{ijl}^{(2)})=2\delta_{im}\delta_{jl}a_{i}\quad {\rm for\ all}\ i,j,l,m\in
 \underline n
.\end{equation}
It follows from \eqref{eq-formula} and   \eqref{sigma(x_ipartial_j)}--\eqref{eq--1k=2} that\begin{eqnarray*}
&&a_{ij_1i}^{(2)}\th_{j_2}-2\delta_{j_2i}a_{i}^2\th_{j_1}\\
&=&\mbox{$\sum\limits_{l=1}^n$}\left(a_{ij_1l}^{(2)}\th_{l}(a_{i})\th_{j_2}-a_{i}\th_{j_2}(a_{ij_1l}^{(2)})
\th_{l}\right)=\mbox{$\sum\limits_{l=1}^n$}[a_{ij_1l_1}^{(2)}\th_{l},a_{i}\th_{j_2}]\\ &=&[\sigma(x_{i}^2\partial_{j_1}), \sigma(x_{i}\partial_{j_2}) ]=\sigma([x_{i}^2\partial_{j_1},x_{i}\partial_{j_2}])=\delta_{j_1i}\sigma(x_{i}^2\partial_{j_2})-2\delta_{j_2i}\sigma(x_{i}^2\partial_{j_1})\\ &=&\mbox{$\sum\limits_{l=1}^n$}(\delta_{j_1i}a_{ij_2l}^{(2)}-2\delta_{j_2i}a_{ij_1l}^{(2)})\th_l.
\end{eqnarray*}
Then by Lemma \ref{lem-linear-inde}, \begin{equation}\label{eq-main}\delta_{j_1i}a_{ij_2l}^{(2)}
-2\delta_{j_2i}a_{ij_1l}^{(2)}=\delta_{j_2l}a_{ij_1i}^{(2)}-2\delta_{j_1 l}\delta_{j_2,i}a_{i}^2\quad{\rm for\ all\ } i,j_1,j_2,l\in\underline n.\end{equation}
 Thus for $l\neq j_2$ and $l\neq j_1$, we have $\delta_{j_1i}a_{ij_2l}^{(2)}-2\delta_{j_2,i}a_{ij_1l}^{(2)}=0$, which implies $a_{j_1j_2l}^{(2)}=2\delta_{j_2j_1}a_{j_1j_1l}^{(2)}$. Hence,
\begin{eqnarray}\label{a_{j_1j_2l=}0}
a^{(2)}_{j_1j_2l}=0\quad {\rm for}\ l\neq j_1\ {\rm and}\ l\neq j_2.
\end{eqnarray}
In case $j_1=j_2=j$, by \eqref{eq-main} we have $-\delta_{ji}a_{ijl}^{(2)}=\delta_{jl}
(a_{iji}^{(2)}-2\delta_{ji}a_{i}^2)$, which gives rise to
\begin{equation}\label{a^2_{iji}=0}
a^{(2)}_{iji}=0 \quad{\rm for}\ i\neq j,
\end{equation}
and \begin{equation}\label{a^2_{iii}=}
a^{(2)}_{iii}=a_i^2\quad {\rm for\ any }\ i.
\end{equation}
It follows from taking  $l=j_1\neq j_2$ in \eqref{eq-main} that  $$\delta_{j_1i}a_{ij_2j_1}^{(2)}-2\delta_{j_2i}a_{ij_1j_1}^{(2)}=-2\delta_{j_2i}a_{i}^2,$$ from which by setting $i=j_2$
we obtain \begin{equation}\label{a^2_{j_2j_1j_1}}
a_{j_2j_1j_1}^{(2)}=a_{j_2}^2\quad {\rm for}\ j_1\neq j_2.
\end{equation}

Now let us collect some useful datum  to deduce the relation promised in Claim \ref{claim2}. By \eqref{a_{j_1j_2l=}0} and \eqref{a^2_{iji}=0} one can see that
\begin{equation}
a^{(2)}_{ijl}=0\quad {\rm for\ all }\ i,j\neq l\in\underline n.
\end{equation}
It immediately follows from \eqref{a^2_{iii}=} and \eqref{a^2_{j_2j_1j_1}} that \begin{equation}
a_{ijj}^{(2)}=a_i^2\quad{\rm for\ all}\ i, j\in\underline n.
\end{equation}
Combining the above two equations gives $a^{(2)}_{ijl}=\delta_{jl}a_i^2$, and therefore \begin{equation}
\sigma(x_i^2\partial_j)=a_i^2\th_j\quad{\rm for\ all\ } i,j\in\underline n.
\end{equation}This completes  the proofs of Claim \ref{claim2} and the lemma.\end{proof}

\begin{lemm}\label{lemm-homorphism}Let $a_i$ be as in Lemma $\ref{lemm--2}$.
We have in fact $a_i\in A_n$ for all $i\in \underline n$, and $$\sigma(x_1^{k_1}x_2^{k_2}\cdots x_n^{k_n}\partial_j)=a_1^{k_1}a_2^{k_2}\cdots a_n^{k_n}\th_j\mbox{ \ for any $j\in\underline n, k_i\in\Z^{\ge0}$.}$$
\end{lemm}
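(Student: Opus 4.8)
The plan is to establish the product formula first, working over $\bar A_n$, and then to deduce $a_i\in A_n$ from it. Write $x^k=x_1^{k_1}\cdots x_n^{k_n}$, $a^k=a_1^{k_1}\cdots a_n^{k_n}$ for $k=(k_1,\dots,k_n)$, let $e_l$ be the $l$-th unit vector, and freely use the two facts already in hand: $[\th_i,\th_j]=\sigma([\partial_i,\partial_j])=0$ and $\th_j(a_i)=\delta_{ij}$. The key structural remark is that, by Lemma \ref{lem-linear-inde}, the $n$ elements $\th_1,\dots,\th_n$ are an $\bar A_n$-basis of the $n$-dimensional space $\bar W_n$; in particular each $\partial_p$ is an $\bar A_n$-combination of the $\th_l$, so any $c\in\bar A_n$ with $\th_l(c)=0$ for all $l$ is annihilated by every $\partial_p$ and hence lies in $\C$. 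I would prove $\sigma(x^k\partial_j)=a^k\th_j$ by induction on $N=k_1+\cdots+k_n$, the cases $N\le1$ being $\sigma(\partial_j)=\th_j$ and Claim \ref{claim1}. For $N\ge2$, expand $\sigma(x^k\partial_j)=\sum_m c_m\th_m$ with $c_m\in\bar A_n$. Applying $\sigma$ to $[\partial_l,x^k\partial_j]=k_l x^{k-e_l}\partial_j$ and using the induction hypothesis at degree $N-1$ gives $[\th_l,\sigma(x^k\partial_j)]=k_l a^{k-e_l}\th_j$, while \eqref{eq-formula} and $[\th_i,\th_j]=0$ give $[\th_l,\sum_m c_m\th_m]=\sum_m\th_l(c_m)\th_m$. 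Comparing coefficients via Lemma \ref{lem-linear-inde} yields $\th_l(c_m)=0$ for $m\ne j$ and $\th_l(c_j)=k_l a^{k-e_l}=\th_l(a^k)$ for every $l$, so by the structural remark $c_m=\gamma_m\in\C$ for $m\ne j$ and $c_j=a^k+\gamma_j$ with $\gamma_j\in\C$; thus $\sigma(x^k\partial_j)=a^k\th_j+\sum_m\gamma_m\th_m$ with all $\gamma_m$ constant.

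It remains to kill the constants. Applying $\sigma$ to $[x_i\partial_i,x^k\partial_j]=(k_i-\delta_{ij})x^k\partial_j$ with $\sigma(x_i\partial_i)=a_i\th_i$, a short computation through \eqref{eq-formula} (using $\th_i(a^k)=k_i a^{k-e_i}$ and $[a_i\th_i,\th_m]=-\delta_{mi}\th_i$) reduces the identity to $-\gamma_i\th_i=(k_i-\delta_{ij})\sum_m\gamma_m\th_m$; matching the $\th_i$-coefficient gives $\gamma_i(k_i-\delta_{ij}+1)=0$, so $\gamma_i=0$ for every $i$ except possibly $\gamma_j$ in the single degenerate case $k_j=0$. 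In that remaining case I would use $[x_j\partial_i,x^k\partial_j]=k_i x^{k-e_i+e_j}\partial_j-x^k\partial_i$ for some $i$ with $k_i\ge1$: both monomials on the right have positive exponent in their differentiating variable, so the two steps already carried out determine $\sigma$ on them with no surviving constant, giving $a^{k-e_i+e_j}\th_j$ and $a^k\th_i$. Expanding the left-hand side by \eqref{eq-formula} with $\sigma(x_j\partial_i)=a_j\th_i$ and matching the $\th_i$-coefficient forces $\gamma_j=0$, completing the induction and proving the displayed formula.

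For the polynomiality claim, I would feed the formula back in. By what has just been proved $\sigma(x_i^{k}\partial_j)=a_i^{k}\th_j\in W_n$ for every $k\ge0$. Writing $\th_j=\sum_p P_{jp}\partial_p$ with $P_{jp}\in A_n$ and $a_i=u_i/v_i$ in lowest terms, membership in $W_n$ forces $v_i^{k}\mid u_i^{k}P_{jp}$, and coprimality of $u_i,v_i$ then gives $v_i^{k}\mid P_{jp}$ for all $k$. Since $\th_j=\sigma(\partial_j)\ne0$ (as $\mathrm{Ker}\,\sigma=0$), some $P_{jp}\ne0$, and $v_i^{k}\mid P_{jp}$ for arbitrarily large $k$ is impossible unless $v_i$ is constant; hence $a_i\in A_n$.

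I expect the main obstacle to be exactly the constant ambiguity. The relations produced by $\mathrm{ad}\,\th_l$ integrate only up to an additive $\C$-combination of the $\th_m$, so Lemma \ref{lem-linear-inde} alone determines $\sigma(x^k\partial_j)$ only modulo constants; eliminating the last constant $\gamma_j$ in the case $k_j=0$ is the delicate point, and it is what forces the extra, carefully chosen bracket $[x_j\partial_i,x^k\partial_j]$ together with the observation that the degree-$N$ monomials it generates already have their own constants pinned down. Everything else is routine manipulation with \eqref{eq-formula} and the derivation rules $\th_j(a_i)=\delta_{ij}$, $[\th_i,\th_j]=0$.
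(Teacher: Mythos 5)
Your proof is correct, but it takes a genuinely different route from the paper's. The paper never solves for unknown coefficients: it writes each target monomial directly as a bracket of monomials already handled, so that $\sigma$ of it is computed outright and no integration constants ever arise. Concretely, the paper first proves $\sigma(x_i^k\partial_j)=a_i^k\th_j$ by induction on $k$, via $\sigma(x_i^{k+1}\partial_j)=\frac12\sigma([x_i^k\partial_i,x_i^2\partial_j])$ for $i\ne j$ and $\sigma(x_i^{k+1}\partial_i)=\sigma([x_i^k\partial_l,x_lx_i\partial_i]+k[x_i^k\partial_i,x_ix_l\partial_l])$ for an auxiliary $l\ne i$ (this is exactly where $n\ge2$ enters); it then deduces $a_i\in A_n$ by the same divisibility argument you give; and it finally reaches general monomials by induction on the number of variables occurring, using the bracket $[x_1^{k_1}\cdots x_{n-1}^{k_{n-1}+1-\delta_{nj}}\partial_j,\,x_n^{k_n+\delta_{nj}}\partial_{n+\delta_{nj}-1}]$. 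You instead extend the mechanism of the proof of Lemma \ref{lemm--2} to all degrees: expand $\sigma(x^k\partial_j)$ in the $\bar A_n$-basis $\{\th_m\}$ (Lemma \ref{lem-linear-inde} plus $\dim_{\bar A_n}\bar W_n=n$), pin the coefficients down modulo $\C$ by the ${\rm ad}\,\th_l$-relations, and then kill the constants with the extra brackets $[x_i\partial_i,x^k\partial_j]$ and, in the degenerate case $k_j=0$, $[x_j\partial_i,x^k\partial_j]$. Your handling of the constants checks out: in the non-degenerate case the equation $\gamma_i(k_i-\delta_{ij}+1)=0$ kills everything, and in the degenerate case the two degree-$N$ monomials $x^{k-e_i+e_j}\partial_j$ and $x^k\partial_i$ are indeed non-degenerate, so their images carry no constants and the final matching forces $\gamma_j=0$. (In fact you could dispense with the second bracket: comparing the $\th_j$-coefficient, for your chosen $i\ne j$ with $k_i\ge1$, in your own identity $-\gamma_i\th_i=(k_i-\delta_{ij})\sum_m\gamma_m\th_m$ already yields $k_i\gamma_j=0$.) The trade-off: your single induction on total degree is uniform, avoids the paper's case-by-case bracket engineering, and its induction step needs no auxiliary variable $l\ne i$, at the price of confronting the constant ambiguity you correctly identify as the crux; the paper's direct-bracket method avoids constants entirely but pays with a two-stage induction and ad hoc bracket identities. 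The difference in ordering --- you prove the full formula over $\bar A_n$ before polynomiality of the $a_i$, the paper inserts polynomiality between the single-variable and multi-variable stages --- is immaterial, since polynomiality only needs $\sigma(x_i^k\partial_j)=a_i^k\th_j\in W_n$ for all $k$, which both arguments supply.
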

\begin{proof}
First we assert $\sigma(x_i^k\partial_j)=a_i^k\th_j$ for any $i,j\in \underline n$ and $k\in\Z^{\ge0}$. We proceed by induction on $k$. By Lemma \ref{lemm--2}, this is true for $k\leq 2$. In particular, by  \eqref{eq-formula}, \eqref{eq--1} and \eqref{th_j(a_i)=} we have \begin{eqnarray}\label{x_ix_l}\sigma(x_lx_i\partial_i)=\frac12\sigma([x_l\partial_i, x_i^2\partial_i])=\frac12[a_l\th_i,a_i^2\th_i]=a_la_i\th_i\quad {\rm for\ any\ } l\neq i\in\underline n.\end{eqnarray}  Suppose that this assertion holds  for the case $k$. Let us see  the  case $k+1$. By  inductive assumption,  we have
\begin{equation*}
\sigma(x_i^{k+1}\partial_j)=\frac12\sigma([x_i^k\partial_i, x_i^2\partial_j])=\frac12[a_i^k\th_i, a_i^2\th_j]=\frac12a_i^k\th_i(a_i^2)\th_j=a_i^{k+1}\th_j\quad {\rm for}\ i\neq j,
\end{equation*} and in case $i=j$, we can always choose $l\ne i$ (since we assume $n\ge2$) such that
\begin{eqnarray*}\sigma(x_i^{k+1}\partial_i)&=&\sigma([x_i^k\partial_l, x_lx_i\partial_i]+k[x_i^k\partial_i, x_ix_l\partial_l])\\ &=&[a_i^k\th_l, a_la_i\th_i]+k[a_i^k\th_i, a_ia_l\th_l]
=a_i^{k+1}\th_i
\end{eqnarray*} by \eqref{x_ix_l}.
So in either case we have proved $\sigma(x_i^{k+1}\partial_j)=a_i^{k+1}\th_j$ for any $i,j\in\underline n$, i.e., the assertion also holds for the case $k+1$.

Now we are going to show that $a_i\in A_n$ for all $i\in \underline n$. Since $\th_i\in W_n$, we can assume that $\th_i=\sum_{j\in\underline n}b_{ji}\partial_j$ for some $b_{ji}\in A_n$. Write $a_i=\frac{p_i}{q_i}$ for some coprime polynomials $p_i,q_i\in A_n$. Then noting from  $\sigma(x_i^k\partial_i)=a_i^k\th_i=\sum_{j\in\underline n}\frac{p_i^k}{q_i^k}b_{ji}\partial_i\in W_n$, we obtain that $q_i^k | b_{ji}$ for any $k\in \Z^{\ge0}$. The only possibility for this is that $q_i\in\C^\times$. This shows $a_i\in A_n$.

Next by induction on $r$ we prove $\sigma(x_{i_1}^{k_{i_1}}x_{i_2}^{k_{i_2}}\cdots x_{i_r}^{k_{i_r}}\partial_j)=a_{i_1}^{k_{i_1}}a_{i_2}^{k_{i_2}}\cdots a_{i_r}^{k_{i_r}}\th_j$ for any $j,i_l\in\underline n$ and $k_{i_l}\in\Z^{\ge0}$. By the first paragraph, this statement holds for $r=1$. Suppose this holds for $1\leq r<n$. Without loss of generality, we show that\begin{equation}\label{k_{n-1}}\sigma(x_{1}^{k_{1}}x_{2}^{k_{2}}\cdots x_{n}^{k_{n}}\partial_j)=a_{1}^{k_{1}}a_{2}^{k_{2}}\cdots a_{n}^{k_{n}}\th_j\end{equation} provided that $\sigma(x_{1}^{k_{1}}x_{2}^{k_{2}}\cdots x_{n-1}^{k_{n-1}}\partial_j)=a_{1}^{k_{1}}a_{2}^{k_{2}}\cdots a_{n-1}^{k_{n-1}}\th_j$ holds. By inductive assumption, we have
\begin{eqnarray*}
&&\sigma(x_{1}^{k_{1}}x_{2}^{k_{2}}\cdots x_{n}^{k_{n}}\partial_j)\\ &=&-\frac{(-1)^{\delta_{nj}}}{k_{n+\delta_{nj}-1}+1}\sigma([x_{1}^{k_{1}}x_{2}^{k_{2}}\cdots x_{n-1}^{k_{n-1}+1-\delta_{nj}}\partial_j, x_n^{k_n+\delta_{nj}}\partial_{n+\delta_{nj}-1}])\\ &=&-\frac{(-1)^{\delta_{nj}}}{k_{n+\delta_{nj}-1}+1}[a_{1}^{k_{1}}a_{2}^{k_{2}}\cdots a_{n-1}^{k_{n-1}+1-\delta_{nj}}\th_j, a_n^{k_n+\delta_{nj}}\th_{n+\delta_{nj}-1}]\\
&=&a_{1}^{k_{1}}a_{2}^{k_{2}}\cdots a_{n}^{k_{n}}\th_j.
\end{eqnarray*} That is, the formula \eqref{k_{n-1}} holds. This completes the proof.
\end{proof}

\section{Proof of Theorem \ref{main-theo}}
\setcounter{equation}{0}

Recall that an $n$-tuple $(f_1,f_2,\cdots, f_n)$ of elements in $A_n$ is called a {\em Jacobi tuple} if the Jacobi determination $J(f_1,f_2,\cdots, f_n)={\rm Det}(\partial_j f_i){}_{1\le i,j\le n}\in \C^\times$.

Before beginning to prove Theorem \ref{main-theo}, we also need to present the following result.

\begin{prop}\label{prop--}
Any nonzero endomorphism of $W_n$ is uniquely determined by a Jacobi tuple.
\end{prop}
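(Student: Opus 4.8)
The plan is to show that the data extracted in Lemmas~\ref{lem-linear-inde}--\ref{lemm-homorphism} already package $\si$ as $\xi(f)$ for a single Jacobi tuple $f$, and that this $f$ is forced. Fix $\si\in{\rm End}\,W_n\bs\{0\}$ and let $\th_j=\si(\ptl_j)$ and $a_1,\dots,a_n\in A_n$ be as in Lemma~\ref{lemm-homorphism}. Since that lemma gives $\si(x_1^{k_1}\cdots x_n^{k_n}\ptl_j)=a_1^{k_1}\cdots a_n^{k_n}\th_j$ on the whole monomial $A_n$-basis of $W_n$, the endomorphism $\si$ is completely determined by the polynomials $a_i$ together with the derivations $\th_j$. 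So it suffices to (i) prove that $f:=(a_1,\dots,a_n)$ is a Jacobi tuple, (ii) identify $\si$ with $\si_f=\xi(f)$, and (iii) check that no other Jacobi tuple yields $\si$.

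For step (i), the main point ---and the step I expect to be the real obstacle--- is to upgrade ``$J(f)$ is a nonzero polynomial'' to ``$J(f)\in\C^\times$''. Here I would use the relation $\th_j(a_i)=\d_{ij}$ from \eqref{th_j(a_i)=}. Writing $\th_j=\sum_{\ell}b_{\ell j}\ptl_\ell$ with $b_{\ell j}\in A_n$ (which is legitimate precisely because $\th_j\in W_n$, not merely $\bar W_n$), these relations become $\sum_\ell(\ptl_\ell a_i)\,b_{\ell j}=\d_{ij}$, i.e. the matrix identity $MB=I_n$, where $M=(\ptl_\ell a_i)_{i,\ell}$ is the Jacobi matrix of $f$ and $B=(b_{\ell j})$. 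Both $M$ and $B$ have entries in $A_n$, so taking determinants gives $\det(M)\det(B)=1$ in $A_n$; since the units of the polynomial ring $A_n$ are exactly $\C^\times$, this forces $J(f)=\det(M)\in\C^\times$. Thus $f\in{\rm JT}_n$. I emphasize that this is exactly where the hypothesis ``endomorphism of $W_n$'' (rather than of $\bar W_n$) is essential: the polynomiality of the $b_{\ell j}$ is what prevents $J(f)$ from being a nonconstant polynomial.

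For step (ii), now that $J(f)=c\in\C^\times$ the endomorphism $\si_f=\xi(f)$ of \eqref{sigma-f}--\eqref{semi-hom1} is defined, and its coefficient derivations $\widetilde\th_j$ are characterized by $\widetilde\th_j(f_i)=\d_{ij}$. Our $\th_j$ satisfy the same system $MB=I_n$; since $M$ is invertible over $\bar A_n$ this system has a unique solution, so $\th_j=\widetilde\th_j$ for all $j$. Comparing the monomial formula for $\si$ with the defining formula \eqref{sigma-f} for $\si_f$ then gives $\si=\si_f=\xi(f)$. Finally, for the uniqueness in step (iii), suppose $\xi(f)=\xi(g)=\si$ for Jacobi tuples $f,g$. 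Evaluating at $x_i\ptl_j$ yields $f_i\,\si(\ptl_j)=\si(x_i\ptl_j)=g_i\,\si(\ptl_j)$, and since $\si(\ptl_j)=\th_j\ne0$ in the $\bar A_n$-vector space $\bar W_n$ (Lemma~\ref{lem-linear-inde}), we get $f_i=g_i$ for all $i$. Hence $f=g$, and $\si$ is determined by a unique Jacobi tuple, as claimed.
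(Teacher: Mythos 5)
Your proposal is correct and follows essentially the same route as the paper: extract $f=(a_1,\dots,a_n)$ and $\th_j$ from Lemmas \ref{lemm--2}--\ref{lemm-homorphism}, convert $\th_j(a_i)=\delta_{ij}$ into the matrix identity $MB=I_n$ over $A_n$, and use the fact that the units of $A_n$ are $\C^\times$ to get $J(f)\in\C^\times$ and the determination of the $\th_j$'s as the inverse matrix. Your steps (ii) and (iii) merely make explicit what the paper handles implicitly (the identification $\si=\xi(f)$) or defers to the proof of Theorem \ref{main-theo}(2) (injectivity of $\xi$), so there is no substantive difference.
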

\begin{proof}
Let $0\neq \sigma\in {\rm End\,}W_n$. Then by Lemmas \ref{lemm--2} and \ref{lemm-homorphism}, there exist $f_i\in A$ and  $\th_i\in W_n$  such that  $$\sigma(x_1^{k_1}x_2^{k_2}\cdots x_n^{k_n}\partial_j)=f_1^{k_1}f_2^{k_2}\cdots f_n^{k_n}\th_j\quad {\rm for\ all\ } j\in\underline n,k_i\in\Z^{\geq 0}$$ and \begin{equation}\label{th_j(f_i)}
\th_j(f_i)=\delta_{ij}\ {\rm for \ any\ } i,j\in \underline n.
\end{equation} For $j\in\underline n$, assume that $\th_j=\sum_{k\in\underline n}a_{jk}\partial_k$ for some $a_{jk}\in A_n$. Then \eqref{th_j(f_i)} is equivalent to $\sum_{k\in\underline n}a_{jk}\partial_k(f_i)=\delta_{ij},$  or in terms of matrix, $$(a_{jk})_{j,k\in\underline n}\ (\partial_k f_i)_{k,i\in\underline n}=I_n\mbox{ (the $n\times n$ identity matrix)}.$$ In particular, $J(f_1,f_2,\cdots, f_n)={\rm Det\,}(\partial_k f_i)_{k,i\in\underline n}\in\C^\times$ and as the inverse  matrix of $(\partial_k f_i)_{k,i\in\underline n}$, $(a_{jk})_{j,k\in\underline n}$ is uniquely determined by $(f_1,f_2,\cdots, f_n)$. This shows that $\sigma$ is uniquely determined by the Jacobi tuple $(f_1,f_2,\cdots, f_n)$.
\end{proof}

\noindent{\em Proof of Theorem \ref{main-theo}}\quad Note that the injectivity and the surjection of $\xi$ follow respectively from  the definition  \eqref{semi-hom1} of $\xi$ and Proposition \ref{prop--},  proving   (2).

To prove (3), we only need to  show that ${\rm Im\,} \zeta\subseteq {\rm Aut\,} W_n$. Since if this is true, then it is easy to see that  $\zeta$ is a bijective map from ${\rm Aut\,}A_n$ onto ${\rm Aut\,}W_n$.   Note that any nonzero element of ${\rm End\,} W_n$ is injective (cf. Section 1).  So it is enough to show that for any given $\tau\in {\rm Aut\,} A_n$, the image $\sigma_{f_\tau}$ of $\tau$ under the map $\zeta$ is surjective in ${\rm End\, }W_n$.   Assume $J(\tau (x_1),\tau (x_2),\cdots, \tau (x_n))=c\in\C^\times$. Let $M^*$ be the adjoint matrix  of $M=\big(\frac{\partial \tau (x_i)}{\partial x_j}\big){}_{1\le i,j\le n}$. Define $\th_j\in W_n$ for $j\in\underline n$ in the following way
$$(\th_1, \th_2,\cdots, \th_n)^T=\frac{1}{c}M^*(\partial_1, \partial_2,\cdots, \partial_n)^T.$$  Here the symbol $T$ stands for the transpose. Then by the definition of $\sigma_{f_\tau}$ (cf. \eqref{sigma-f}), we have \begin{equation}\label{def-sigma}
\sigma_{f_\tau}(h\partial_j)=\tau(h)\th_j\quad {\rm for\ all\ } j\in\underline n{\ \rm and\ } h\in A_n.
\end{equation} Since $M$ is non-degenerate,  so is $M^*$ and thereby each $\partial_i$ is an $A_n$-linear combination of $\th_j$'s, say,
\begin{equation}
\label{eq-partial_i}\partial_{i}=\mbox{$\sum\limits_{j\in\underline n}$}b_{ji}\th_j
\end{equation} for some $b_{ji}\in A_n$. Thus,  $\partial_i=\sigma_{f_\tau}\Big(\sum_{j\in\underline n}\tau^{-1}(b_{ji})\partial_j\Big)\in {\rm Im\,}\sigma_{f_\tau}\subseteq W_n$ for any $i\in\underline n$.

 On the other hand,  by \eqref{def-sigma} and \eqref{eq-partial_i} one can see that
\begin{equation*}
\sigma_{f_\tau}\Big(\tau^{-1}(x_i^2)\mbox{$\sum\limits_{k\in\underline n}$}\tau^{-1}(b_{kj})\partial_k\Big)=x_i^2\mbox{$\sum\limits_{k\in\underline n}$}b_{kj}\th_k=x_i^2\partial_j.
\end{equation*}In particular, $x_i^2\partial_j\in {\rm Im\,}\sigma_{f_\tau}\subseteq W_n$ for any $i,j\in\underline n$.
Thus we have obtained $$\{x_i^2\partial_j,\partial_j\mid i,j\in\underline n\}\subseteq {\rm Im\,}\sigma_{f_\tau}\subseteq W_n.$$ This forces $W_n={\rm Im\,}\sigma_{f_\tau}$ since $\{x_i^2\partial_j,\partial_j\mid i,j\in\underline n\}$ is a generating set of the Lie algebra $W_n$ (recall that we assume $n\ge2$, cf.~Lemma \ref{lemm-homorphism}). This shows the surjection of $\sigma_{f_\tau}$.

As we have mentioned,  the  Jacobi conjecture following from the Witt algebra's conjecture was proved in \cite[Theorem 4.1]{Zhao}, so for (1) it remains to show that the Jacobi conjecture implies the Witt algebra's conjecture. Let $\phi\in {\rm End\,}W_n\bs\{0\}$. We have to show $\phi\in{\rm Aut\,}W_n$. By (2), $\phi$ corresponds to a Jacobi tuple, say, $\xi^{-1}(\phi)=f_\phi=(f_{\phi1},f_{\phi2},\cdots, f_{\phi n}).$ This Jacobi tuple induces an endomorphism $\tau$ of $A_n$ defined by   $$\tau(x_1^{k_1}x_2^{k_2}\cdots x_n^{k_n})=f_{\phi1}^{k_1}f_{\phi2}^{k_2}\cdots f_{\phi n}^{k_n}\quad {\rm for\ any\ } k_l\in\Z^{\geq0}.$$ Now it follows from the equivalent statement of the Jacobi conjecture as  remarked in Section 1 that  $\tau\in {\rm Aut\,} A_n$. So by (3), $\sigma_{f_\tau}=\zeta(\tau)\in {\rm Aut\,}W_n$, where $f_\tau=f_\phi$. Then it follows from (2)  that $\phi=\sigma_{f_\phi}= \sigma_{f_\tau} \in {\rm Aut\,}W_n$, as desired.

Note that (4) follows immediately from (3) and the fact that ${\rm Aut\,}A_2$ is generated by $s=s_1, \,\tau_a\  ({\rm for\ } a\in\C^\times)$ and $\psi_p\ ({\rm for\ } p\in\Z^{\geq0})$ (cf. Section 1 and \cite{N}). This completes the  proof Theorem \ref{main-theo}.

\end{CJK*}
\end{document}